\theoremstyle{plain}
\newtheorem{theorem}{Theorem}[section]
\newtheorem{proposition}[theorem]{Proposition}
\newtheorem{corollary}[theorem]{Corollary}
\theoremstyle{definition}
\newtheorem{definition}[theorem]{Definition}
\theoremstyle{remark}
\newtheorem{remark}[theorem]{Remark}
\newtheorem{remarks}[theorem]{Remarks}
\newtheorem{example}[theorem]{Example}
\newtheorem{examples}[theorem]{Examples}
\newenvironment{tfae}
{
    \begin{enumerate}}
        {\end{enumerate}}
\tikzset{commutative diagrams/.cd,
epi/.style = {two heads},
mono/.style = {tail},
cokernel/.code={\pgfsetarrowsend{Triangle[open]}},
kernel/.code={\pgfsetarrowsstart{Triangle[open, reversed]}},
inclu/.style = {hook},
iso/.style = {"\sim" sloped},
pb/.style ={"\lrcorner"{anchor=center, pos=0.125}, draw=none}
}
\newcommand{\defn}[1]{\textbf{#1}}
\newcommand{\noproof}{\hfil\qed}
\newcommand{\N}{\mathbb{N}}
\newcommand{\C}{\mathcal{C}}
\newcommand{\E}{\mathcal{E}}
\newcommand{\Power}{\mathcal{P}}
\renewcommand{\S}{\mathcal{S}}
\newcommand{\U}{\mathcal{U}}
\newcommand{\V}{\mathcal{V}}
\renewcommand{\leqq}{\preccurlyeq}
\newcommand{\To}{\Rightarrow}
\newcommand{\NM}{\mathrm{NM}}
\newcommand{\CExt}{\mathrm{CExt}}
\newcommand{\Ext}{\mathrm{Ext}}
\newcommand{\DExt}{\text{$2$-$\Ext$}}
\newcommand{\DCExt}{\text{$2$-$\CExt$}}
\newcommand{\Hom}{\mathrm{Hom}}
\newcommand{\coker}{\mathrm{coker}}
\newcommand{\op}{\mathrm{op}}
\newcommand{\EXT}{\mathsf{Ext}}
\newcommand{\Set}{\mathsf{Set}}
\newcommand{\Ab}{\mathsf{Ab}}
\title[The cohomology objects of a semi-abelian variety are small]{The cohomology objects of a\\ semi-abelian variety are small}
\date{\today}
\author{Sébastien Mattenet}
\author{Tim Van~der Linden}
\author{Raphaël Jungers}
\email[Sébastien Mattenet]{sebastien.mattenet@uclouvain.be}
\email[Tim Van~der Linden]{tim.vanderlinden@uclouvain.be} \email[Tim Van~der Linden]{tim.van.der.linden@vub.be}
\email[Raphaël Jungers]{raphael.jungers@uclouvain.be}
\address[Sébastien Mattenet, Raphaël Jungers]{Institut de Recherche en Math\'ematique Appliquée, Universit\'e catholique de Louvain, avenue Georges Lema\^itre~4 bte~L4.05.01, B--1348 Louvain-la-Neuve, Belgium}
\address[Tim Van der Linden]{Institut de Recherche en Math\'ematique et Physique, Universit\'e catholique de Louvain, che\-min du cyclotron~2 bte~L7.01.02, B--1348 Louvain-la-Neuve, Belgium}
\address[Tim Van der Linden]{Mathematics \& Data Science, Vrije Universiteit Brussel, Pleinlaan 2, B--1050 Brussel, Belgium}
\thanks{The second author is a Senior Research Associate of the Fonds de la Recherche Scientifique--FNRS}
\subjclass[2020]{03E25, 03E30, 18E13, 18G15, 18G50}
\keywords{Yoneda extension; double extension; crossed extension; Schreier extension; cohomology; semi-abelian category; set}
\begin{document}

\begin{abstract}
    A well-known, but often ignored issue in Yoneda-style definitions of cohomology objects via collections of $n$-step extensions (i.e., equivalence classes of exact sequences of a given length~$n$ between two given objects, usually subject to further criteria, and equipped with some algebraic structure) is, whether such a collection of extensions forms a set. We explain that in the context of a semi-abelian variety of algebras, the answer to this question is, essentially, yes: for the collection of all $n$-step extensions between any two objects, a set of representing extensions can be chosen, so that the collection of extensions is ``small'' in the sense that a bijection to a set exists.

    We further consider some variations on this result, involving double extensions and crossed extensions (in the context of a semi-abelian variety), and Schreier extensions (in the category of monoids).
\end{abstract}

\maketitle

\section{Introduction}
Yoneda's classical approach to cohomology in abelian categories~\cite{Yoneda-Exact-Sequences} involves groups whose elements are equivalence classes (here called \defn{$n$-step extensions}) of exact sequences
\[
    0\to K\to X_n\to\cdots\to X_1\to Q\to 0
\]
of a fixed length $n\geq 1$ between two given objects $K$ and $Q$. A priori, it is not clear that these groups are legitimate: in question is, whether they have underlying (small) sets. Indeed, already in the case of one-step extensions, so isomorphism classes of short exact sequences
\[
    0\to K\to X_1\to Q\to 0
\]
where the objects $K$ and $Q$ are fixed, there seems to be no reason in general for the collection of all such to form a set.

(Actually, strictly speaking, this \emph{must} be false, because for any given one-step extension of abelian groups, the short exact sequences it consists of form a proper class, which would be in contradiction with the fact\footnote{We reason in the standard context of (ZFC) with a chosen Grothendieck universe. Details are recalled in Section~\ref{Section Sets} below.} that the elements of a set are always sets themselves. On the other hand, it may be shown---see below---that the collection of one-step extensions is in bijection with a small set, namely a chosen set of representative short exact sequences, so it is ``small'' in this sense.)

In the context of an abelian category, powerful homotopical-algebraic techniques have been developed over the past decades to fully address this question, each at its own level of generality. These include the theories of model categories~\cite{Quillen}, encompassing localisation at a chosen class of morphisms~\cite{MR2102294}, derived categories~\cite{Verdier, MR1453167}, and categories of fractions~\cite{Gabriel-Zisman}.

This article aims to investigate the problem in the non-additive setting of semi-abelian categories~\cite{Janelidze-Marki-Tholen}. One way to address this size issue---both here and in related cohomology theories that classify exact sequences---is to reinterpret the cohomology objects. This can be done, for instance, via derived functors, showing that these objects are isomorphic to well-defined groups with underlying sets. Often, a cohomology group is constructed as a subquotient of a hom-set that naturally inherits a group structure from one of the objects. This is the approach of~\cite{MacLane:Homology,Weibel} in the abelian case; see Example~\ref{Example Central Extensions} and Section~\ref{Section Variation} for concrete non-additive illustrations.

But what if no such interpretation is known? In his book ``Homology'', using slightly different terminology, Mac Lane deals with the problem as follows~\cite[end of Section~IX.1]{MacLane:Homology}:
\begin{quote}
    ``To keep the foundations in order we wish the collection of all subobjects of an object $A$ and the collection of all extensions of $A$ by~$C$ both to be sets and not classes. Hence, for an additive category we assume two additional axioms:

    \emph{Sets of sub- and quotient objects.} For each object $A$ there is a set of morphisms $x$, each monic with range $A$, which contains a representative of every subobject of $A$ and dually, for quotient objects of $A$.

    \emph{Set of extensions.} For each pair of objects $C$, $A$ and each $n\geq 1$ there is a set of $n$-fold exact sequences from $A$ to $C$ containing a representative of every congruence class of such sequences (with ``congruence'' defined as in III.5).

    Both axioms hold in all the relevant examples.''
\end{quote}
``Relevant'' here means of course, those examples treated in Mac Lane's book. Can we show that in the semi-abelian setting, the second axiom always holds, using the same techniques as in the abelian context, perhaps?

Certain successful concrete applications notwithstanding, it appears the context of semi-abelian categories is remarkably resilient against the use of standard techniques of homotopical algebra such as the ones cited above. Is it, for example, not at all clear whether the chain complexes in a semi-abelian category admit a suitable Quillen model structure. As explained in detail in~\cite{CRVdL}, homotopy of chain maps crucially depends on additivity of the category; while it is possible to define chain homotopies in semi-abelian categories, it is currently unknown whether these can be characterised by means of a model structure. And even though simplicial objects in a semi-abelian category do always form a model category~\cite{VdLinden:Simp}, any connection with $n$-step extensions which might be there is still to be understood. Likewise, the formalism of derived categories is intrinsically additive. While neither categories of fractions nor localisations are, it is however not clear precisely how they would apply to the problem at hand. It seems to us that such an approach cannot be based too much on ideas that work for abelian categories, since in our experience this tends to fail. For this reason, we have to resort to an ad-hoc argument. The approach of Dwyer, Hirschhorn, Kan \& Smith  outlined in the book~\cite{MR2102294} suggests that a comparison with the very general language of homotopical categories would be worthwhile. We plan to return to this in future work.

In this article we prove that, if a variety of algebras is semi-abelian, then the collection of all $n$-step extensions between any two objects in it is indeed always small. This includes categories of modules over a ring as additive examples, and the categories of groups, rings, algebras over a ring, all varieties of $\Omega$-groups in the sense of Higgins~\cite{Higgins}, Heyting semilattices and loops, for instance, as non-additive ones. Often, in practice, the collection of \emph{all} extensions between two given objects is then further restricted (e.g., in order to enforce compatibility with a given action).

This result is a simple consequence of Bourn \& Janelidze's characterisation~\cite{Bourn-Janelidze} of semi-abelian varieties by means of operations and identities (here Theorem~\ref{SA characterisation}), which allows us to obtain a bound on the collection of one-step extensions between any two given objects  (Corollary~\ref{Tim retract} and the ensuing Theorem~\ref{set many SES}). For extensions of greater length $n\geq 2$, the result (Theorem~\ref{as many n exact sequences as SES}) follows by a reduction (by means of a standard syzygy argument) to the case $n=1$.

In Section~\ref{Section Variation}, we outline a variation on this: we consider \emph{double (central) extensions} and the closely related \emph{crossed extensions}.

Our method also works outside the semi-abelian context, though any such results need to be checked on a case-by-case basis. In Section~\ref{Monoids}, we treat the example of \emph{Schreier extensions} of monoids. This motivates us to provide an overview of the basic definitions in a context which is as wide as possible: this is the subject of Section~\ref{Extensions Defs}. First, however, we comment on the set-theoretical foundations we shall be adopting.

\section{Set-theoretical preliminaries}\label{Section Sets}
In this article, we work in a standard set-theoretical environment---(ZFC) with a chosen Grothendieck universe~\cite[II.\ Appendice: Univers]{SGA4}---making essential use of the Axiom of Choice. For the sake of completeness, we here recall the very basics of this approach, closely following its presentation in~\cite{HHerrlichGEStrecker1973}.

We assume the axioms of (ZFC), so the Zermelo--Fraenkel axioms together with the Axiom of Choice, but instead of calling the objects that satisfy them \emph{sets}, we call them \defn{conglomerates}. We have, for instance, the empty conglomerate $\varnothing$, the conglomerate of natural numbers $\N$, or the power-conglomerate $\Power(A)$ when $A$ is a conglomerate.

The other Zermelo--Fraenkel axioms are, in essence: two conglomerates are equal if and only if they have the same elements; selecting elements in a given conglomerate $A$ by means of a predicate $P$ determines the sub-conglomerate $\{x\in A\mid P(x)\}$ of $A$; the union of all the elements of a conglomerate forms again a conglomerate; given two conglomerates $A$, $B$, the pair $\{A,B\}$ is a conglomerate; and any collection of conglomerates which is in one-one correspondence with a given conglomerate is again a conglomerate.

Choice means that any epimorphism of conglomerates admits a section: whenever $q\colon X\to Q$ is a surjection, there exists $s\colon Q\to X$ such that $qs=1_Q$.

\begin{definition}
    A \defn{universe} is a conglomerate $\U$ satisfying the following axioms:
    \begin{enumerate}[(U\,1)]
        \item $\N\in \U$;
        \item $A\in \U\To \bigcup A\in \U$;
        \item $A\in \U\To \Power(A)\in \U$;
        \item if $I\in \U$ and $f\colon I\to \U$ is a function, then $f(I)\in \U$;
        \item $a\in A\in \U\To a\in \U$.
    \end{enumerate}
    The elements of the conglomerate $\U$ are called \defn{$\U$-sets}, and the subconglomerates of $\U$ are called \defn{$\U$-classes}.
\end{definition}

The idea is that we can carry out all of the usual operations of set theory on the elements of a universe and still the result will be an element of that universe. In fact, it is easy to see that the $\U$-sets again satisfy the axioms of (ZFC).

We assume that a universe exists and fix a universe $\U$ once and for all. We then call the $\U$-sets simply \defn{sets} and we call the $\U$-classes \defn{classes}. By~(U\,5), any set is a class. A~class is \defn{proper} when it is not a set, and \defn{small} when it is. A~conglomerate is \defn{small} when it admits a bijection to a set, and \defn{proper} when it is not a class.

All of the mathematical objects we consider are now defined in terms of these sets and classes, unless otherwise mentioned. (Sometimes we shall consider proper conglomerates.) In particular, a category is assumed to have classes of objects and arrows, is \defn{locally small} when the arrows between any two objects form a set, and is \defn{small} when the class of arrows is a set. For instance, the locally small category $\Set$ has the universe $\U$ for its class of objects.

\begin{remark}\label{Remark on singletons}
    Note that by (U\,5), the elements of any set are again sets. The elements of a class are sets by definition of a class. So no set and no class can have a proper class as an element. On the other hand, the class of all singletons is proper, because otherwise by (U\,4), the class of all sets would itself be a set.
\end{remark}

\section{Extensions in pointed categories}\label{Extensions Defs}
We start by recalling some fundamental definitions, mainly following the approach of~\cite{PVdL2}, leading to the notion of extension in the context of a category with kernels and cokernels.

A category is \defn{pointed} when it admits a \defn{zero object}, which is an object that is both initial and terminal. In a pointed category, for any two objects $A$ and~$B$ we have the \defn{zero morphism} $A\to 0\to B$ from $A$ to $B$.

The \defn{kernel} $\ker(f)\colon {K(f)\to A}$ of a morphism $f\colon A\to B$ is the equaliser of $f$ and $0$, while the \defn{cokernel} $\coker(f)\colon {B\to Q(f)}$ is their coequaliser. (Here we abuse terminology slightly, since (co)kernels are unique up to isomorphism only. We call \emph{the} (co)kernel any chosen morphism satisfying the universal property.)

A category \defn{with kernels and cokernels} (called a \defn{z-exact} category in~\cite{PVdL2}) is a pointed category in which each morphism admits a kernel and a cokernel.

In a category with kernels and cokernels, a \defn{normal monomorphism} is a morphism $k$ for which a morphism $f$ exists such that $k=\ker(f)$, and a \defn{normal epimorphism} is a morphism $q$ for which a morphism $f$ exists such that $q=\coker(f)$. It is not hard to see that a morphism $k$ is a normal monomorphism if and only if $k=\ker(\coker(k))$, while a morphism $q$ is a normal epimorphism if and only if $q=\coker(\ker(q))$. Hence we may define a \defn{short exact sequence}
\[
    \xymatrix{0 \ar[r] & K \ar[r]^k & X \ar[r]^q & Q \ar[r] & 0}
\]
as a pair $(k,q)$ where, equivalently,
\begin{tfae}
    \item $k=\ker(q)$ and $q=\coker(k)$;
    \item $k$ is a normal monomorphism and $q=\coker(k)$;
    \item $k=\ker(q)$ and $q$ is a normal epimorphism.
\end{tfae}

It is easy to see that if $f=me$ with $m$ a monomorphism, then $\ker(f)=\ker(e)$. Dually, if $e$ is an epimorphism, then $\coker(f)=\coker(m)$. It follows that when a morphism $f\colon A\to B$ admits a factorisation $f=me$ where $e\colon A\to I$ is a normal epimorphism and $m\colon I\to B$ is a normal monomorphism, this factorisation is unique up to a unique isomorphism. Hence in a category with kernels and cokernels, it makes sense to say that a morphism $f$ is \defn{normal} when it admits such a so-called \defn{normal image factorisation}. If a normal morphism $f$ factors as $f=me$ where $m$ is a monomorphism and $e$ is an epimorphism, then $m$ is a normal monomorphism, and $e$ a normal epimorphism.

Given $n\geq 1$, an \defn{exact sequence of length $n$} is a sequence of maps
\[
    0\to K\to X_n\to\cdots\to X_1\to Q\to 0
\]
which may be obtained from $n$ short exact sequences $(m_{i+1},e_i)$ spliced together as in Figure~\ref{Figure Splice}. In other words, all of the morphisms $f_{n+1}$, \dots, $f_1$ are normal, and if $f_i=m_ie_i$ denote their normal image factorisations, then each pair $(m_{i+1},e_i)$ is a short exact sequence. Note that an exact sequence of length $1$ is just a short exact sequence (where $X_1=X$ in the above).
\begin{figure}
    \resizebox{\textwidth}{!}{
    \xymatrix@!0@=3em{
    &&&& I_{n} \ar@/^1.5ex/[rd]^-{m_{n}} &&&&
    \cdots \ar@/^1.5ex/[rd]^-{m_2} \\
    0 \ar[r] &
    K \ar@{=}@/_1.5ex/[rd] \ar[rr]^-{f_{n+1}} &&
    X_{n} \ar@/^1.5ex/[ru]^-{e_{n}} \ar[rr]_-{f_{n}} &&
    X_{n-1} \ar@/_1.5ex/[rd]_-{e_{n-1}} \ar[rr]^-{f_{n-1}} &&
    X_{n-2} \ar[r] \ar@/^1.5ex/[ru] & \cdots \ar[r] &
    X_{1} \ar[rr]^-{f_1} \ar@/_1.5ex/[rd]_-{e_1} &&
    Q \ar[r] & 0\\
    && I_{n+1} \ar@/_1.5ex/[ru]_-{m_{n+1}} &&&&
    I_{n-1} \ar@/_1.5ex/[ru]_-{m_{n-1}} &&&& I_1 \ar@{=}@/_1.5ex/[ru]}}
    \caption{Spicing results in exact sequence of length $n$}\label{Figure Splice}
\end{figure}

A \defn{morphism} of exact sequences of length $n$ is a tuple $(\alpha_n,\dots,\alpha_1)$ such that the diagram
\[
    \xymatrix{
    0 \ar[r] & K \ar@{-->}[d] \ar[r]^-{f_{n+1}} &
    X_{n} \ar[d]^-{\alpha_n} \ar[r]^-{f_{n}} &
    X_{n-1} \ar[d]^-{\alpha_{n-1}} \ar[r] & \cdots \ar[r] &
    X_{2}\ar[r]^-{f_{2}} \ar[d]^-{\alpha_{2}} &
    X_{1} \ar[r]^-{f_1} \ar[d]^-{\alpha_{1}} &
    Q \ar@{-->}[d] \ar[r] & 0\\
    0 \ar[r] & L  \ar[r]_-{g_{n+1}} &
    Y_{n}  \ar[r]_-{g_{n}} &
    Y_{n-1}  \ar[r] & \cdots \ar[r] &
    Y_{2} \ar[r]_-{g_{2}} &
    Y_{1} \ar[r]_-{g_1}  &
    R \ar[r] & 0}
\]
commutes, in which the dashed arrows are uniquely induced by taking kernels and cokernels. Identities and composition of morphisms are pointwise. One could define a category of exact sequences of length $n$, but we are only interested in morphisms which keep the objects at the ends fixed, so that the dashed arrows are $1_K$ and $1_Q$, respectively. The resulting category is denoted $\EXT^n(Q,K)$; its objects are called exact sequences of length $n$ \defn{from $K$ to $Q$}.

\begin{remarks}\label{Remark EXT not small}
    \begin{enumerate}
        \item Note this category is, in general, not small. Indeed, even in the case of abelian groups, $\EXT^1(0,0)$ has a proper class of objects, all of which are isomorphic. The reason is that each singleton set $\{X\}$ admits a (unique) abelian group structure, making $0\to \{X\}\to 0$ a short exact sequence, and any two such are isomorphic. Each set $X$ induces a short exact sequence of that kind.
        \item Since the category $\EXT^1(0,0)$ is connected, this further provides an example of a situation where the connected components of a category $\EXT^n(Q,K)$ are proper classes.
    \end{enumerate}
\end{remarks}

The connected components of a category $\EXT^n(Q,K)$ are called \defn{$n$-step extensions (under $K$ and over $Q$)}. These form a conglomerate denoted
\[
    \Ext^n(Q,K)\coloneq\pi_0(\EXT^n(Q,K))\text{.}
\]
Two exact sequences of length $n$ from $K$ to $Q$ represent the same $n$-step extension if and only if there exists a zigzag of morphisms between them. Since, as explained above in the case of abelian groups (Remarks~\ref{Remark EXT not small}), it is likely that the elements of $\Ext^n(Q,K)$ are proper classes, this conglomerate is seldom a set or even a class (Remark~\ref{Remark on singletons}). However, such a proper conglomerate may still be ``small'' in the sense that a bijection to a set exists; more precisely, we may often choose a set of representing short exact sequences. For instance, in the above example, $\Ext^1(0,0)$ is a small conglomerate, since it is in bijection with any singleton set.

As explained in the Introduction, the aim of this article is to provide sufficient conditions on the surrounding category for these collections to be small. Even though this question makes sense in any category with kernels and cokernels, so far we can only provide a general answer in the context of semi-abelian varieties. The next section recalls what these are, and treats the case of one-step extensions. We do not have a comprehensive understanding of when $\Ext^1(Q,K)$ is small outside the context of semi-abelian varieties, but can still say a few things when the problem is considered for monoids, provided the classes are restricted to so-called \emph{Schreier extensions}---see Section~\ref{Monoids}.

\section{One-step extensions in semi-abelian varieties}
By definition, a category is \defn{semi-abelian} in the sense of Janelidze, Márki \& Tholen~\cite{Janelidze-Marki-Tholen} when it is pointed, Barr exact~\cite{Barr-Grillet-vanOsdol} and Bourn protomodular~\cite{Bourn1991} with finite coproducts. Since it may be shown that a semi-abelian category is always finitely (co)complete, kernels and cokernels always exist, so that the approach to $n$-step extensions of the previous section applies.

Let us recall that \defn{Barr exactness} means that finite limits exist, as well as coequalisers of kernel pairs, that regular epimorphisms are pullback-stable, and that each internal equivalence relation is a kernel pair. All abelian categories are Barr exact, as is any (elementary) topos, and any variety of algebras in the sense of universal algebra. The \defn{protomodularity} condition says that the \emph{Split Short Five Lemma} holds, or equivalently, that when given a split epimorphism $q$ with section~$s$ and kernel $k$ as in
\begin{equation*}\label{Split Extension}
    \xymatrix{K \ar[r]^-k & X \ar@<.5ex>[r]^-{q} & Q\text{,} \ar@<.5ex>[l]^-{s}}
\end{equation*}
the middle object $X$ is ``covered'' or ``generated'' by the outer objects $K$ and $Q$ in the sense that the monomorphisms $k$ and $s$ are \emph{jointly extremally epimorphic}, which means that they do not both factor through a proper subobject of~$X$. Further details are given in~\cite{Janelidze-Marki-Tholen,Borceux-Bourn} and \cite{PVdL2}, for instance.

\begin{examples}
    Examples include any abelian category, the category of groups, and more generally any variety of $\Omega$-groups~\cite{Higgins}, of which by definition the signature admits a group operation and a unique constant, which means that each algebra contains the one-element algebra as a subalgebra---such as any variety of Lie algebras over a ring or, more generally, any variety of non-associative algebras over a ring. Examples of a different kind include the dual of the category of pointed sets, the category of loops~\cite{Borceux-Clementino,EverVdL4}, and the category of cocommutative Hopf algebras over a field~\cite{MR3958087}.
\end{examples}

In the case of a variety of universal algebras, Bourn \& Janelidze found the following characterisation of semi-abelianness in terms of operations and identities~\cite{Bourn-Janelidze}:

\begin{theorem}\label{SA characterisation}
    Let $\V$ be a variety of algebras. $\V$ is semi-abelian if and only if the free algebra over the empty set is a singleton (in other words, there is a unique nullary operation, a constant denoted $0$), and there exist
    \begin{itemize}
        \item an integer $\ell\geq 1$,
        \item $\ell$ binary operations $\alpha_i$ such that $\alpha_i(x,x)=0$ for all $i=1,\dots, \ell$, and
        \item an $(\ell+1)$-ary operation $\beta$ such that $\beta(\alpha_1(x,y),\dots,\alpha_\ell(x,y),y)=x$.\noproof
    \end{itemize}
\end{theorem}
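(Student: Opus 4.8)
The plan is to treat the two implications separately; the substantial one is that a semi-abelian variety admits such operations. Throughout I use that any variety $\V$ is automatically Barr exact (finite limits, effective quotients of equivalence relations, pullback-stable regular epimorphisms) and cocomplete, hence has finite coproducts, so that ``semi-abelian'' for a variety means exactly ``pointed and protomodular''. I also use the standard facts that the terminal object of $\V$ is the one-element algebra, that its initial object is the free algebra $F\varnothing$ on the empty set, and hence that $\V$ is pointed precisely when $F\varnothing$ is a singleton; in that case the subalgebra generated by $\varnothing$ in any algebra is a singleton $\{0\}$, so the only constant of any free algebra is $0$. Finally I use repeatedly that an equation holds in the free algebra on a set of generators if and only if the corresponding identity holds in $\V$.

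For the implication $(\Leftarrow)$: pointedness is immediate from the hypothesis on $F\varnothing$, so it remains to verify protomodularity in the form recalled before the theorem. Let $q\colon X\to Q$ be a split epimorphism with section $s$ and kernel $k\colon K\to X$, and let $X'$ be a subalgebra of $X$ containing $\im(k)=K$ and $\im(s)$; I must show $X'=X$. Given $x\in X$, we have $sq(x)\in\im(s)\subseteq X'$, and for each $i$, using $qs=1_Q$ and the identity $\alpha_i(x,x)=0$ valid in $Q\in\V$, we get $q(\alpha_i(x,sq(x)))=\alpha_i(q(x),q(x))=0$, so $\alpha_i(x,sq(x))\in K\subseteq X'$. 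Since $X'$ is closed under $\beta$, the identity $\beta(\alpha_1(x,y),\dots,\alpha_\ell(x,y),y)=x$ yields $x=\beta(\alpha_1(x,sq(x)),\dots,\alpha_\ell(x,sq(x)),sq(x))\in X'$, as required.

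For the implication $(\Rightarrow)$: the idea is to extract the operations by applying protomodularity to one carefully chosen split epimorphism over the free algebra on two generators. Take $F\{x,y\}$, the homomorphism $q\colon F\{x,y\}\to F\{y\}$ with $q(x)=q(y)=y$, and the section $s\colon F\{y\}\to F\{x,y\}$ with $s(y)=y$; then $q$ is a regular epimorphism split by $s$, with kernel $K=\{\,t(x,y)\mid t(y,y)=0 \text{ in } F\{y\}\,\}$ --- that is, $K$ consists of the binary terms vanishing on the diagonal. Protomodularity forces the subalgebra of $F\{x,y\}$ generated by $K\cup\{y\}$ to be everything, so $x$ lies in it: there is an $(\ell+1)$-ary term $\beta$ and elements $k_1,\dots,k_\ell\in K$ with $x=\beta(k_1,\dots,k_\ell,y)$ (insert a dummy $y$ if necessary). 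Writing $k_j=\alpha_j(x,y)$, membership $k_j\in K$ says $\alpha_j(y,y)=0$ in $F\{y\}$, that is, the identity $\alpha_j(w,w)=0$ holds in $\V$; and $x=\beta(\alpha_1(x,y),\dots,\alpha_\ell(x,y),y)$ holds in the free algebra $F\{x,y\}$, hence is an identity of $\V$. It remains to see $\ell\geq1$: if $x$ could be generated from $y$ alone, then applying the endomorphism of $F\{x,y\}$ fixing $x$ and sending $y\mapsto0$ would exhibit $x$ as a constant, so $x=0$ and $\V$ is trivial --- and in the trivial variety one simply takes $\ell=1$ with $\alpha_1$ and $\beta$ equal to first projections.

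The step I expect to be the real obstacle is recognising, in $(\Rightarrow)$, that the ``fold'' map $F\{x,y\}\to F\{y\}$ identifying the two generators is the split epimorphism to feed into protomodularity: its kernel is exactly the set of binary terms killed on the diagonal, which is precisely what the relations $\alpha_i(x,x)=0$ encode, while its total object contains $x$, which for a non-trivial variety does not belong to the image of the section. Once this is seen, the rest is bookkeeping with the dictionary between equations in free algebras and identities of the variety, together with the routine verification that varieties are Barr exact with finite coproducts used in $(\Leftarrow)$.
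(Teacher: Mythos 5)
The paper does not prove this statement: it is quoted from Bourn and Janelidze's article \emph{Characterization of protomodular varieties of universal algebras} and marked with a ``no proof'' sign, so there is no internal argument to compare yours against. Your blind proof is correct and is essentially the standard published argument: for $(\Leftarrow)$ you verify that the kernel and the section jointly generate the middle algebra of any split extension (which is the form of protomodularity the paper itself adopts as equivalent to the Split Short Five Lemma), and for $(\Rightarrow)$ you apply that generation property to the folding retraction $F\{x,y\}\rightleftarrows F\{y\}$, whose kernel consists exactly of the binary terms vanishing on the diagonal, and then transfer equations in free algebras to identities of the variety. Your handling of the side cases --- absorbing the unary terms in $y$ into $\beta$, and disposing of $\ell=0$ by showing the variety would then be trivial --- is also sound.
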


\begin{remark}\label{Remark zeroes}
    These identities imply the identity $\beta(0,\dots,0,y)=y$, which will be used in what follows. Indeed, $\beta(0,\dots,0,y)=\beta(\alpha_1(y,y),\dots,\alpha_\ell(y,y),y)=y$.
\end{remark}

\begin{examples}
    \begin{enumerate}
        \item For a variety of $\Omega$-groups, we may choose $\ell=1$ and use the group operation to define $\alpha_1(x,y)=xy^{-1}$ and $\beta(z,t)=zt$.
        \item The variety of loops is semi-abelian, since we may put $\alpha_1(x,y)=x/y$ and $\beta(z,t)=z\cdot t$; see~\cite{Borceux-Clementino,EverVdL4} for details.
        \item    The semi-abelian variety of Heyting semilattices is special~\cite{Johnstone:Heyting}, since necessarily here, $\ell\geq2$.
    \end{enumerate}
\end{examples}

Turning to extensions in semi-abelian categories, a first important result we should recall is the Short Five Lemma, whose validity implies that the equivalence classes the conglomerate $\Ext^1(Q,K)$ consists of, are isomorphism classes:

\begin{theorem}[Short Five Lemma~\cite{Bourn1991}]
    In a semi-abelian category, consider a morphism of short exact sequences.
    \[
        \begin{tikzcd}
            0 \ar[r] & K \ar[r,"k"] \ar[d,"\kappa"'] & X \ar[r,"q"] \ar[d,"\xi"] & Q \ar[d,"\eta"] \ar[r] & 0 \\
            0 \ar[r] & {K'} \ar[r,"k'"'] & {X'}\ar[r,"q'"'] & {Q'} \ar[r] & 0
        \end{tikzcd}
    \]
    If $\kappa$ and $\eta$ are isomorphisms, then so is $\xi$.\noproof
\end{theorem}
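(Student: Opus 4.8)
The plan is to prove the Short Five Lemma by the two classical steps, that $\xi$ is a monomorphism and that $\xi$ is a regular (equivalently, normal) epimorphism, organised so that each step uses nothing beyond Bourn protomodularity and the elementary calculus of subobjects and images available in any Barr exact category. Since a morphism which is at once a monomorphism and a regular epimorphism is an isomorphism, establishing these two facts concludes the argument. To set things up I would first reduce to the case $Q=Q'$ and $\eta=1_Q$: as $\eta$ is an isomorphism, pulling the bottom short exact sequence back along $\eta$ yields an isomorphic short exact sequence over $Q$ (its kernel and cokernel are unchanged up to canonical isomorphism, and the kernel is identified with $K'$), and the comparison $\xi$ factors through it via an induced morphism which is an isomorphism precisely when $\xi$ is. Replacing the bottom row by this pullback turns $\eta$ into an identity, leaves $\kappa$ an isomorphism, and alters neither hypothesis nor conclusion; in particular one then has $q=q'\xi$.

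For Step 1, I would first check that $\ker(\xi)=0$ by a one-line computation with generalised elements: if $\xi x=0$ then $q'\xi x=0$, hence $qx=0$ because $\eta$ is monic, so $x$ factors through $k=\ker(q)$ as $x=ku$; then $k'\kappa u=\xi ku=\xi x=0$, and since $k'$ is monic and $\kappa$ an isomorphism this forces $u=0$, whence $x=0$. I would then invoke the standard fact that in a pointed protomodular category a morphism with trivial kernel is a monomorphism: for $\xi$ this holds because the first projection $p_1\colon R[\xi]\to X$ of the kernel pair is a split epimorphism, split by the diagonal $\Delta$, whose kernel is $\ker(\xi)=0$; protomodularity then gives that $\Delta$ together with the (zero) kernel inclusion is jointly extremally epimorphic, so $\Delta$ on its own is extremally epimorphic, and being also a split monomorphism it is an isomorphism; hence $p_1=p_2$ and $\xi$ is monic.

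For Step 2, I would work with the pullback $X\times_Q X'$ of $q$ along $q'$, with projections $\pi_1$ and $\pi_2$. Because $q'\xi=q$, the pair $(1_X,\xi)$ is a section of $\pi_1$, so $\pi_1$ is a split epimorphism, and its kernel is the copy of $K'=\ker(q')$ sitting inside $X\times_Q X'$ (on which $\pi_2$ restricts to $k'$). Protomodularity then says that this section $(1_X,\xi)$ and the inclusion of $K'$ are jointly extremally epimorphic, i.e.\ the join of their images is all of $X\times_Q X'$. Now $\pi_2$ is a regular epimorphism, being a pullback of the regular epimorphism $q$; applying $\pi_2$ and using that in a regular category direct image along a morphism preserves finite joins of subobjects and carries images to images, one gets that $X'=\pi_2(X\times_Q X')$ equals the join of $\im(\xi)$ and $\im(k')$. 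Finally $\im(\xi k)=\im(k'\kappa)=\im(k')$ since $\kappa$ is an isomorphism, and $\im(\xi k)$ is contained in $\im(\xi)$, so $\im(k')$ is contained in $\im(\xi)$ and therefore $\im(\xi)=X'$. Thus the monic part of the regular image factorisation of $\xi$ is an isomorphism, so $\xi$ is a regular epimorphism, and combined with Step 1, $\xi$ is an isomorphism.

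The calculations here are all routine; the point requiring genuine care is to make sure every step consumes nothing beyond protomodularity and regular-category bookkeeping rather than some abelian reflex, and indeed both steps rest on the single protomodular input that a split epimorphism and its kernel are jointly extremally epimorphic. The mildly delicate points are the identification, inside the pullback $X\times_Q X'$, of $\ker(\pi_1)$ with $K'$ and of the section of $\pi_1$ with $(1_X,\xi)$, and the appeal to the fact that regular epimorphisms preserve finite joins of subobjects, which holds in any Barr exact category but should be cited explicitly.
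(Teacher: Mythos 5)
The paper does not prove this statement at all: it is quoted as a known result with a citation to Bourn's 1991 paper and marked as requiring no proof, so there is no in-paper argument to compare yours against. Your proof is correct, and it is essentially the standard argument for the (regular) Short Five Lemma in homological and semi-abelian categories, as in Borceux--Bourn: reduce to $\eta=1_Q$, show $\xi$ is a monomorphism via the protomodular fact that trivial kernel implies monic (your kernel-pair argument with the diagonal as a split monomorphism is the usual proof of that fact), and show $\xi$ is a regular epimorphism by applying protomodularity to the split epimorphism $\pi_1\colon X\times_Q X'\to X$ with section $(1_X,\xi)$ and kernel $K'$, then pushing the resulting join forward along the regular epimorphism $\pi_2$. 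All the ingredients you invoke are available here: joins of subobjects exist because semi-abelian categories have finite coproducts and images, direct image preserves joins by the adjunction $\exists_f\dashv f^*$ valid in any regular category, and regular epimorphisms are pullback-stable by Barr exactness. The only stylistic remark is that the two steps can be packaged as a single appeal to the Split Short Five Lemma applied to the split epimorphism $\pi_1$ (comparing $(1_X,\xi)$ and the canonical section coming from the top row), which is closer to how Bourn originally deduces the non-split version from the split one; your version makes the monomorphism and epimorphism halves explicit, which costs a little more bookkeeping but uses nothing beyond what you claim.
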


So in this context, for a given one-step extension over $K$ and under $Q$, the middle objects in its representing short exact sequences are all isomorphic. Those middle objects ``extend'' the kernel object.

Next, we restrict ourselves to the varietal setting, on the way to proving the key Theorem~\ref{set many SES}. This crucially depends on the next result, borrowed from~\cite{PVdL2}. It is a strengthening of Proposition~3.3 in \cite{MMClementinoAMontoliLSousa2015}, which in turn was based on work of Inyangala~\cite{Inyangala} and Gray \& Martins-Ferreira~\cite{JRAGrayNMartinsFerreira2018}.

\begin{corollary}\label{Tim retract}
    Let $\V$ be a semi-abelian variety of algebras and $\ell$ an integer as in Theorem~\ref{SA characterisation}. Then any short exact sequence
    \[
        \xymatrix{0 \ar[r] & K \ar[r]^k & X \ar[r]^q & Q \ar[r] & 0}
    \]
    is a retract over $Q$ of the short exact sequence
    \[
        \xymatrix@C=3em{0 \ar[r] & K^{\ell} \ar[r]^-{(1_{K^{\ell}},0)} & K^{\ell}\times Q \ar[r]^-{\pi_Q} & Q \ar[r] & 0}
    \]
    in the category of pointed sets.
\end{corollary}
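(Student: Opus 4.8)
The plan is to write down explicit pointed maps realising the retraction, using the operations $\alpha_1,\dots,\alpha_\ell$ and $\beta$ of Theorem~\ref{SA characterisation} together with a set-theoretic section of $q$. Recall that being a retract over $Q$ means that there are morphisms of short exact sequences of pointed sets, both the identity on $Q$, from the first sequence to the second and back, whose composite is the identity morphism. Since a morphism of short exact sequences which is the identity on $Q$ is determined by its middle component (the component on the kernels being then induced, via $k=\ker q$), it suffices to produce pointed maps $s\colon X\to K^{\ell}\times Q$ and $r\colon K^{\ell}\times Q\to X$ with $\pi_Q\circ s=q$, with $q\circ r=\pi_Q$, and with $r\circ s=1_X$.

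Since $q$ is a normal epimorphism in a variety, it is a surjective homomorphism, so by the Axiom of Choice there is a section $\sigma\colon Q\to X$ of $q$ in $\Set$; as $0\in q^{-1}(0)$ we may and do take $\sigma$ with $\sigma(0)=0$. For $x\in X$ and each $i$ we have $q(\alpha_i(x,\sigma q(x)))=\alpha_i(q(x),q(x))=0$, so $\alpha_i(x,\sigma q(x))$ lies in $\ker q$; let $a_i(x)\in K$ be the unique element with $k(a_i(x))=\alpha_i(x,\sigma q(x))$. Define
\[
    s(x)=\bigl((a_1(x),\dots,a_\ell(x)),\,q(x)\bigr)\qquad\text{and}\qquad r\bigl((b_1,\dots,b_\ell),u\bigr)=\beta\bigl(k(b_1),\dots,k(b_\ell),\sigma(u)\bigr).
\]

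It then remains to run the (routine) verifications. We have $\pi_Q\circ s=q$ by construction, and $q\circ r=\pi_Q$ because $q(\beta(k(b_1),\dots,k(b_\ell),\sigma(u)))=\beta(0,\dots,0,u)=u$ by Remark~\ref{Remark zeroes}. Both $s$ and $r$ send $0$ to $0$, using $\alpha_i(0,0)=0$ and $\beta(0,\dots,0,0)=0$ (again Remark~\ref{Remark zeroes}). Finally, for $x\in X$,
\[
    r(s(x))=\beta\bigl(k(a_1(x)),\dots,k(a_\ell(x)),\sigma q(x)\bigr)=\beta\bigl(\alpha_1(x,\sigma q(x)),\dots,\alpha_\ell(x,\sigma q(x)),\sigma q(x)\bigr)=x
\]
by the identity $\beta(\alpha_1(x,y),\dots,\alpha_\ell(x,y),y)=x$ of Theorem~\ref{SA characterisation}, applied with $y=\sigma q(x)$. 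Hence $r\circ s=1_X$. Passing to kernels, $s$ and $r$ induce the maps $a\mapsto(\alpha_1(a,0),\dots,\alpha_\ell(a,0))$ and $(b_1,\dots,b_\ell)\mapsto\beta(b_1,\dots,b_\ell,0)$ on $K$, which compose to $1_K$ by the same identity (with $y=0$); so the first sequence is a retract of the second over $Q$ in the category of pointed sets.

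I do not expect a genuine obstacle here: the whole argument is driven by the two identities of Theorem~\ref{SA characterisation} and by Remark~\ref{Remark zeroes}. The one point that needs care is checking that $s$ is well defined, i.e. that $\alpha_i(x,\sigma q(x))$ really lands in $\ker q$ and hence factors uniquely through $k$; and one should keep in mind that $s$ and $r$ are genuinely only maps of pointed sets, not homomorphisms, since they are built from the non-homomorphic section $\sigma$ — which is exactly why the conclusion is stated in the category of pointed sets rather than in $\V$.
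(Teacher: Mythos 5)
Your proof is correct and follows essentially the same route as the paper's: the maps $r$ and $s$ are exactly the paper's $\phi$ and $\psi$ (built from a base-point--preserving set-theoretic section and the operations $\alpha_i$, $\beta$), with the only cosmetic difference that you keep the kernel $k$ explicit where the paper treats it as a subalgebra inclusion. The paper additionally records that $\psi$ and $\phi$ are compatible with the sections of $q$ and $\pi_Q$, but this is a bonus observation rather than a gap in your argument.
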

\begin{proof}
    Without any loss of generality we may treat the kernel $k$ as a subalgebra inclusion and thus view the elements of $K$ as special elements of $X$. Choose a section $s\colon Q\to X$ of $q$, and define the functions
    \begin{align*}
        \phi \colon K^\ell\times Q\to X & \colon (k_1,\dots,k_\ell,y)\mapsto \beta(k_1,\dots,k_\ell,s(y))     \\
        \psi \colon X\to K^\ell\times Q & \colon x\mapsto (\alpha_1(x,sq(x)),\dots,\alpha_\ell(x,sq(x)),q(x))
    \end{align*}
    Here we use the identities satisfied by the $\alpha_i$. The equality $\phi(\psi(x))=x$ follows immediately from the identity involving $\beta$.

    We further note that $\pi_Q\psi=q$ and
    \begin{align*}
        q(\phi(k_1,\dots,k_\ell,y)) & = q(\beta(k_1,\dots,k_\ell,s(y)))                 = \beta(q(k_1),\dots,q(k_\ell),q(s(y))) \\
                                    & = \beta(0, \dots, 0, y)= y=\pi_Q(k_1,\dots,k_\ell,y)
    \end{align*}
    so that $q\phi=\pi_Q$. This already proves our claim in the category of sets.

    Let us now consider compatibility with the canonical base-points (induced by the constant in the theory of $\V$). It turns out that this depends on the section $s$ preserving $0$. Of course, it is always possible to choose such an $s$.

    For $y\in Q$, we have
    \begin{align*}
        \psi(s(y)) & =(\alpha_1(s(y),sqs(y)), \dots, \alpha_\ell(s(y),sqs(y)),qs(y))                   \\
                   & =(\alpha_1(s(y),s(y)), \dots, \alpha_\ell(s(y),s(y)),y)          =(0, \dots, 0,y)
    \end{align*}
    so that $\psi s=(0,\dots,0,1_Q)$. Likewise, $\phi(0,\dots,0,y) = \beta(0,\dots,0,s(y)) = s(y)$ and so $s=\phi (0,\dots,0,1_Q)$. This proves that the canonical section of $\pi_Q$ is compatible with the section $s$ of $q$. In particular, taking $y=0$ we see that $\phi$ and $\psi$ preserve the base-point as soon as so does $s$.
\end{proof}

\begin{remark}
    Note that since neither the section $s$, nor the number $\ell$ in the characterisation of a semiabelian variety is unique, the algebra $X$ can have several presentations as a retract of $K^\ell\times Q$, one for each~$\ell$ and each chosen section $s$.
\end{remark}

From this we deduce:
\begin{theorem}\label{set many SES}
    For any two algebras $K$, $Q$ in a semi-abelian variety $\V$, the conglomerate $\Ext^1(Q,K)$ is small.
\end{theorem}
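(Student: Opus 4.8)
The plan is to use Corollary~\ref{Tim retract} to impose a uniform bound on the size of the middle objects of short exact sequences from $K$ to $Q$, and then to let the foundational machinery of Section~\ref{Section Sets} do the rest. Fix once and for all an integer $\ell\geq 1$ as in Theorem~\ref{SA characterisation}, and put $P\coloneq K^{\ell}\times Q$, a \emph{fixed} set. For any short exact sequence $\sigma=(0\to K\to X\to Q\to 0)$, choosing a section $s\colon Q\to X$ of its normal epimorphism with $s(0)=0$, Corollary~\ref{Tim retract} provides a pointed map $\psi\colon X\to P$ admitting a retraction $\phi$ with $\phi\psi=1_X$; in particular $\psi$ is injective. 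Thus the underlying pointed set of $X$ is, up to pointed bijection, a subset of $P$.

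First I would carry out the reduction to ``small models''. Writing $\psi(X)\subseteq P$ and pushing the $\V$-algebra structure of $X$ forward along the bijection $X\to\psi(X)$, and transporting $k$ and $q$ along the same bijection, we obtain a short exact sequence $\sigma'$ whose middle object has underlying set an element of $\Power(P)$, together with an isomorphism $\sigma\cong\sigma'$ fixing $K$ and $Q$. Hence every object of $\EXT^1(Q,K)$ is isomorphic to one lying in the subconglomerate
\[
    \mathcal{S}\coloneq\bigl\{\,0\to K\xrightarrow{k} X\xrightarrow{q} Q\to 0 \;\bigm|\; \text{$X$ a $\V$-algebra with underlying set in $\Power(P)$}\,\bigr\}\text{.}
\]
Next I would check that $\mathcal{S}$ is a set: the signature of $\V$ is a set of operation symbols, so for each $S\in\Power(P)$ the $\V$-algebra structures carried by $S$ form a subconglomerate of the product, over the operation symbols $\omega$ of arity $n_\omega$, of the sets of functions $S^{n_\omega}\to S$, hence a set; and since the variety $\V$ is locally small, for fixed algebras $X$ the admissible pairs $(k,q)$ form a subset of $\Hom_{\V}(K,X)\times\Hom_{\V}(X,Q)$. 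Assembling over the set $\Power(P)$ shows $\mathcal{S}$ is a set.

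Finally I would pass to connected components. By the Short Five Lemma, two short exact sequences from $K$ to $Q$ lie in the same connected component of $\EXT^1(Q,K)$ exactly when they are isomorphic, so the restriction of this relation to the set $\mathcal{S}$ is an equivalence relation and the quotient $\mathcal{S}/{\sim}$ is a set. The map $\mathcal{S}/{\sim}\to\Ext^1(Q,K)$ sending $[\sigma]$ to the connected component of $\sigma$ is well defined and injective by construction, and surjective by the reduction of the second step; it is therefore a bijection, exhibiting $\Ext^1(Q,K)$ as small.

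The mathematical content of the argument is concentrated entirely in Corollary~\ref{Tim retract}; the remaining steps are bookkeeping within the framework of Section~\ref{Section Sets}. The one place that calls for care is the opening reduction: because $\psi$ is merely \emph{injective} rather than an isomorphism, one cannot transport the algebra structure along $\psi$ itself, but must restrict to the image $\psi(X)$ and use the bijection $X\to\psi(X)$, so that the resulting short exact sequence is genuinely isomorphic to $\sigma$.
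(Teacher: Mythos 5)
Your argument is correct and follows essentially the same route as the paper: both proofs reduce, via Corollary~\ref{Tim retract}, to short exact sequences whose middle object is an algebra structure on a subset of $K^{\ell}\times Q$, and then observe that there is only a set of such data. The only (immaterial) difference is in the final bookkeeping: the paper well-orders its set of representatives and selects a minimum in each extension class, whereas you quotient the set $\mathcal{S}$ by isomorphism and invoke the Short Five Lemma to identify that quotient with $\Ext^1(Q,K)$.
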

\begin{proof}
    Once and for all, we fix a number $\ell$ as in Corollary~\ref{Tim retract}. We denote by $\NM(Q,K,\ell)$ the set of all normal monomorphisms $k\colon K\to X$ in $\V$ where the underlying set of the codomain $X$ is a subset of $K^\ell\times Q$. This conglomerate is indeed a set, because (1) there is only a set of subsets of $K^\ell\times Q$; (2) on any set, there is only a set of $\V$-algebra structures; and (3) between any two algebras there is only a set of morphisms. To each one-step extension under $K$ and over~$Q$, we now associate a representative short exact sequence $(k,q)$ from $K$ to $Q$ whose normal monomorphism $k$ belongs to $\NM(Q,K,\ell)$ and whose normal epimorphism $q$ is a restriction of the projection~$\pi_Q$. Since any such short exact sequence uniquely determines the extension to which it belongs, the conglomerate of extensions is then in bijection with a set.

    We put a well-order $\leqq$ on the set $\NM(Q,K,\ell)$. Given a  one-step extension under~$K$ and over $Q$ now, we consider the minimum for the order $\leqq$ of all short exact sequences $(k,q)$ from $K$ to $Q$ in this extension, where $k$ belongs to $\NM(Q,K,\ell)$ and where the normal epimorphism $q$ is the restriction of $\pi_Q$ to $X$. Provided a short exact sequence of that kind always exists, so that the subset of the well-order $(\NM(Q,K,\ell),\leqq)$ we take a minimum of is non-empty, the extension and the thus chosen representative do indeed determine each other.

    It is here that we apply Corollary~\ref{Tim retract}. Take any element $(k,q)$ of the given extension. Take a base-point--preserving section $s$ of $q$. The corollary provides us with a monomorphism $\psi\colon X\to K^\ell\times Q$ over $Q$ in the category of pointed sets. It determines subset $X'$ of $K^\ell\times Q$ onto which the algebra structure of $X$ may be transported via $\psi$. It is clear that the induced normal epimorphism $q'\colon X'\to Q$ is a restriction of $\pi_Q$. Together with the kernel $k'=\psi k\colon K\to X'$, we find the needed representing short exact sequence $(k',q')$.
\end{proof}

\begin{example}[Central extensions]\label{Example Central Extensions}
    As explained in~\cite{Gran-VdL}, we may consider the sub-conglomerate $\CExt^1(Q,K)$ of $\Ext^1(Q,K)$ consisting of those extensions which are \defn{central}. One way of defining these, is by saying that for any representative short exact sequence $0\to K\to X\to Q\to 0$, the commutator $[K,X]$ must vanish. In particular then, $K$ is an abelian object. In the article~\cite{Gran-VdL}, the conglomerate $\CExt^1(Q,K)$ is implicitly shown to be small, since it admits an interpretation as a cohomology group. Now we may view this result as a simple consequence of Theorem~\ref{set many SES}.
\end{example}

\section{The syzygy argument}\label{Section Syzygy}

Let $\C$ be a category with kernels and cokernels. An object $P$ of $\C$ is \defn{normal-projective} if for every normal epimorphism
\begin{tikzcd}[cramped, sep=small]
    X\ar[r,cokernel]&Y
\end{tikzcd} and for every morphism ${P\to Y}$ there exists a morphism ${P\to X}$ making the diagram
\[
    \begin{tikzcd}
        &X\\
        P&Y
        \arrow["\exists",dashed,from=2-1,to=1-2]
        \arrow["\forall"',from=2-1,to=2-2]
        \arrow[cokernel,from=1-2,to=2-2]
    \end{tikzcd}
\]
commute. The category $\C$ \defn{has enough normal-projectives} when for every object $Q$ of $\C$ there exists a normal-projective object $P$ and a normal epimorphism $p\colon {P\to Q}$. This $p$ is \defn{weakly universal} amongst normal epimorphisms with codomain $Q$, in the sense that for any normal epimorphism $q\colon X\to Q$, a morphism $x\colon P\to X$ exists such that $qx=p$.

Semi-abelian varieties of algebras do always have enough normal-projectives, because the free objects are projective with respect to the regular epimorphisms (=~surjective algebra morphisms), and all regular epimorphisms are normal. In the context of an abelian category, all epimorphisms are normal, so we regain the usual definition of a projective object.

A \defn{syzygy} of an object $Q$ is a short exact sequence
\[
    \xymatrix{0 \ar[r] & \Omega(Q) \ar[r]^-w & P \ar[r]^-p & Q \ar[r] & 0}
\]
where the middle object $P$ is normal-projective. We sometimes refer to just the object $\Omega(Q)$ as a syzygy of $Q$. We then write $\Omega(Q)=\Omega^1(Q)$ for any chosen syzygy, and recursively define $\Omega^{n+1}(Q)\coloneq\Omega(\Omega^n(Q))$.

\begin{theorem}[Syzygy Theorem]\label{Syzygy Theorem}
    In a category with kernels and cokernels, with pullback-stable normal epimorphisms and with enough normal-projectives, for each $n\geq1$ and any objects $Q$ and $K$, there are surjections
    \[
        \begin{tikzcd}[cramped, sep=small]
            \Ext^1(\Omega^n(Q),K)\ar[r,epi]&\cdots \ar[r,epi] & \Ext^n(\Omega(Q),K) \ar[r,epi]& \Ext^{n+1}(Q,K)\text{.}
        \end{tikzcd}
    \]
\end{theorem}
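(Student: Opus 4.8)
The plan is to deduce the entire chain from one statement applied repeatedly: for every object $Q'$, every $K$, every chosen syzygy $0\to\Omega(Q')\xrightarrow{w}P\xrightarrow{p}Q'\to0$ and every $m\geq1$, the \emph{splicing map}
\[
    \Phi\colon\Ext^m(\Omega(Q'),K)\longrightarrow\Ext^{m+1}(Q',K)
\]
that sends the class of an $m$-step extension $\mathcal Y$ from $K$ to $\Omega(Q')$ to the class obtained by composing the last (necessarily normal epimorphic) map of $\mathcal Y$ with $w$ is surjective. Granting this, the theorem follows by applying it with $Q'=\Omega^k(Q)$ and $m=n-k$ for $k=n-1,\dots,0$ and composing the resulting surjections. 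Two points are routine and I would dispose of them first: $\Phi$ is well defined on $\pi_0$, since any morphism of $m$-step extensions over $\Omega(Q')$ extends, by $1_P$ and $1_{Q'}$, to a morphism of the spliced $(m+1)$-step extensions, so $\Phi$ carries zigzags to zigzags; and the spliced sequence genuinely has length $m+1$, because a normal epimorphism followed by a normal monomorphism is a normal image factorisation (Section~\ref{Extensions Defs}), so the composite is normal with normal image $\Omega(Q')=\ker(p)$, which is exactly the splicing condition at the new junction.

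The heart of the matter is surjectivity of $\Phi$, which I would establish by the classical dimension-shift (syzygy) argument, adapted to the weak hypotheses at hand. Fix an $(m+1)$-step extension $E$, represented by $0\to K\to X_{m+1}\to\cdots\to X_2\xrightarrow{f_2}X_1\xrightarrow{f_1}Q'\to0$, with $f_1$ a normal epimorphism and $f_2=m_2e_2$ its normal image factorisation, so that $m_2\colon I_2\to X_1$ is the kernel of $f_1$. As $P$ is normal-projective and $f_1$ is a normal epimorphism, weak universality supplies $x_1\colon P\to X_1$ with $f_1x_1=p$; since $pw=0$, the composite $x_1w$ factors through $m_2$, yielding $\bar x\colon\Omega(Q')\to I_2$ with $x_1w=m_2\bar x$. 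Now pull back the normal epimorphism $e_2$ along $\bar x$: pullback-stability of normal epimorphisms gives a normal epimorphism $\bar e_2\colon\bar X_2\to\Omega(Q')$, where $\bar X_2=X_2\times_{I_2}\Omega(Q')$, and a direct computation shows $\ker(\bar e_2)=\ker(e_2)=\ker(f_2)$, included into $\bar X_2$ via the first projection. Replacing the bottom short exact sequence of the $m$-step tail $0\to K\to X_{m+1}\to\cdots\to X_2\to I_2\to0$ of $E$ by $0\to\ker(f_2)\to\bar X_2\xrightarrow{\bar e_2}\Omega(Q')\to0$ produces an $m$-step extension $\bar{\mathcal Y}$ from $K$ to $\Omega(Q')$: everything above $X_3$ is inherited from $E$, and $f_3$ factors through $\ker(f_2)=\ker(\bar e_2)\hookrightarrow\bar X_2$ with unchanged normal image, so all splicing conditions still hold.

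It then remains to verify $\Phi([\bar{\mathcal Y}])=[E]$, which I would do by writing down a single morphism of $(m+1)$-step extensions, from the splice of $\bar{\mathcal Y}$ with the chosen syzygy to $E$: the identity on $K$, on $X_{m+1},\dots,X_3$ and on $Q'$; the pullback projection $\bar X_2\to X_2$ in the remaining middle spot; and $x_1\colon P\to X_1$ in the last spot. Commutativity of the two non-identity squares unwinds to the relations $f_1x_1=p$, $x_1w=m_2\bar x$, the defining pullback equation of $\bar X_2$, and the coincidence of the images of $f_3$ and of the induced map $X_3\to\bar X_2$. Since such a morphism keeps $K$ and $Q'$ fixed, it places $E$ and the splice in one connected component of $\EXT^{m+1}(Q',K)$, which is precisely surjectivity of $\Phi$.

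The main obstacle is not the dimension shift itself---that is textbook---but running it under hypotheses that exclude most of the usual infrastructure: we have no pushouts (so the customary move of pushing the syzygy forward along $\bar x$ is unavailable), no regularity, no additivity, only kernels, cokernels, weak universality of normal-projective covers, and pullbacks of normal epimorphisms along arbitrary maps. The genuine work is therefore in the two verifications this forces: that the pulled-back tail $\bar{\mathcal Y}$ is still an honest exact sequence---this is exactly where pullback-stability of normal epimorphisms and the ``normal-epi then normal-mono is a normal image factorisation'' principle are needed---and that the comparison morphism to $E$ must be produced explicitly rather than harvested from a universal property. Both are bookkeeping, but they are the only points at which the argument could fail.
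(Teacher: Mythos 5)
Your proposal is correct and follows essentially the same route as the paper's proof: the same splicing map, surjectivity via weak universality of the normal-projective cover to get $x_1$ with $f_1x_1=p$, restriction to kernels to obtain $\bar x$, pullback of the normal epimorphism $e_2$ along $\bar x$ (where pullback-stability is used), identification of the kernels, and an explicit comparison morphism built from identities, the pullback projection and $x_1$ to connect the splice with the given sequence. The only cosmetic difference is that you phrase the connecting step as a single explicitly written morphism of exact sequences, whereas the paper records the same data in a diagram.
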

\begin{proof}
    We explain why there is a surjection
    \[
        \begin{tikzcd}[cramped, sep=small]
            w\colon\Ext^n(\Omega(Q),K) \ar[r,epi]& \Ext^{n+1}(Q,K)
        \end{tikzcd}
    \]
    for any natural number $n\geq 1$. The rest of the claim then follows by induction. Note that here we are not assuming that these conglomerates are small.

    The function $w$ takes an exact sequence
    \[
        0\to K\to X_n\to\cdots\to X_1\to \Omega(Q)\to 0
    \]
    of length $n$ and splices it on top of the syzygy
    \[
        0\to \Omega(Q)\to P\to Q\to 0
    \]
    so that we obtain the exact sequence
    \[
        0\to K\to X_n\to\cdots\to X_1\to P\to Q\to 0\text{.}
    \]
    This function is well defined: it is obvious that it preserves the equivalence relation, since any zigzag between exact sequences of length $n$ induces a zigzag between exact sequences of length $n+1$.

    We show that $w$ is surjective: it is here that we use that normal epimorphisms are pullback-stable. Indeed, then any exact sequence of length $n+1$ from $K$ to $Q$ pulls back to an exact sequence of length $n$ from $K$ to $\Omega(Q)$ as in Figure~\ref{Figure Pullback}.
    \begin{figure}
        \resizebox{\textwidth}{!}{
        \xymatrix{&&&&&&&0 \ar[r] &\Omega(Q) \ar@/^1.5ex/[rd]_-{w} \ar@{.>}[dddd]^-{\overline{\alpha}_{1}} \ar@{.>}[r] & 0\\
        0 \ar@{.>}[r] & K \ar@{:}[dd] \ar@{.>}[r] &
        Y_{n} \ar@{:}[dd]_-{\alpha_n} \ar@{.>}[r]^-{g_{n}} &
        Y_{n-1} \ar@{:}[dd]_-{\alpha_{n-1}} \ar@{.>}[r] & \cdots \ar@{.>}[r] & Y_2 \ar@{:}[dd]_-{\alpha_3} \ar@{.>}[rr]^{g_2} \ar@{.>}@/_1.5ex/[rd]^-{e_{3}} &&
        Y_{1} \ar@{.>}[dd]^-{\alpha_{2}} \ar@{.>}@/^1.5ex/[ru]_-{g_{1}} &&
        P \ar[r]^-{p} \ar@{.>}[dd]^-{\alpha_{1}} &
        Q \ar@{=}[dd] \ar[r] & 0\\
        &&&&&& I_3 \ar@/^1.5ex/[rd]_-{m_3} \ar@{.>}@/_1.5ex/[ru]^-{l_{2}} &&&&\\
        0 \ar[r] & K \ar[r] &
        X_{n+1}  \ar[r]_-{f_{n+1}} &
        X_{n}  \ar[r] & \cdots \ar[r] & X_3 \ar@/^1.5ex/[ru]_-{e_3} \ar[rr]_{f_3} &&
        X_{2} \ar[rr]|\hole^(.75){f_{2}} \ar@/_1.5ex/[rd]_-{e_{2}} &&
        X_{1} \ar[r]_-{f_1}  &
        Q \ar[r] & 0\\
        &&&&&&&&I_{2} \ar@/_1.5ex/[ru]_-{m_{2}}}}
        \caption{Syzygy and pullback}\label{Figure Pullback}
    \end{figure}
    This then proves that $w$ is surjective, because when this sequence is spliced on top of the chosen syzygy, the result is connected to the given sequence.

    We explain the details. Since $p$ is a weakly universal normal epimorphism, we may choose $\alpha_1\colon P\to X_1$ so that $f_1\alpha_1=p$; restricting it to the kernels $w$ of $p$ and $m_2$ of~$f_1$, we find $\overline{\alpha}_1\colon \Omega(Q)\to I_2$ such that $m_2\overline{\alpha}_1=\alpha_1w$. The construction of an exact sequence of length $n$ from $K$ to $\Omega(Q)$ starts here and depends on this choice of a morphism~$\overline{\alpha}_1$.

    Pull back the normal epimorphism $e_2$ to a normal epimorphism $g_1$ as in the figure. This also provides us with a morphism $\alpha_2\colon Y_1\to X_2$. By construction, the commutative square $\overline{\alpha}_1g_1=e_2\alpha_2$ is a pullback, so that the kernels of $e_2$ and $g_1$ coincide. More precisely, the kernel $m_3\colon I_3\to X_2$ of $e_2$ lifts over $\alpha_2$ to a morphism $l_2\colon I_3\to Y_1$ where $g_1l_2=0$ and which is easily seen to be a kernel of $g_1$. Because of this, we can put $Y_i=X_{i+1}$ and $g_{i+1}=f_{i+2}$ for all $i\geq 2$ and $g_2=l_2e_3\colon Y_2\to Y_1$.
\end{proof}

Together with Theorem~\ref{set many SES}, this gives us our main result:

\begin{theorem}\label{as many n exact sequences as SES}
    If, in a category with kernels and cokernels, with pullback-stable normal epimorphisms and with enough normal-projectives, $\Ext^1(Q,K)$ is small for all objects $Q$ and $K$, then so is $\Ext^n(Q,K)$ for each $n\geq1$. This is the case, for instance, in any semi-abelian variety of algebras.
\end{theorem}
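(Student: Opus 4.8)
The plan is to derive the statement from the Syzygy Theorem (Theorem~\ref{Syzygy Theorem}) together with Theorem~\ref{set many SES}, using the elementary principle that a surjective image of a small conglomerate is again small. That principle is where the Axiom of Choice enters, and it---together with the attendant set-theoretic bookkeeping---is the only part of the argument that requires genuine care.

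First I would establish the principle. Let $f\colon A\to B$ be a surjection of conglomerates with $A$ small, and fix a bijection $b\colon A\to S$ onto a set $S$. By Choice (applied to conglomerates, as in Section~\ref{Section Sets}), $f$ admits a section $s\colon B\to A$; since $fs=1_B$, the map $s$ is injective, and thus induces a bijection $B\to s(B)$ onto the sub-conglomerate $s(B)=\{a\in A\mid \exists\, y\in B,\ a=s(y)\}$ of $A$. Transporting along $b$, we identify $B$ with the sub-conglomerate $\{t\in S\mid b^{-1}(t)\in s(B)\}$ of $S$, and this is a set: it lies in $\Power(S)\in\U$ by (U\,3), hence in $\U$ by (U\,5). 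Thus $B$ is small.

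Second, fixing $n\geq 1$ and objects $Q$, $K$, I would invoke the Syzygy Theorem---whose hypotheses (kernels and cokernels, pullback-stable normal epimorphisms, enough normal-projectives) are exactly those assumed here---and compose the chain of surjections it supplies to obtain a single surjection $\Ext^1(\Omega^n(Q),K)\to\Ext^{n+1}(Q,K)$. Its source is a conglomerate of one-step extensions between the two objects $\Omega^n(Q)$ and $K$, hence small by hypothesis; by the principle above, $\Ext^{n+1}(Q,K)$ is small. Together with the case $n=1$, which is the hypothesis itself, this shows that $\Ext^n(Q,K)$ is small for every $n\geq 1$.

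Finally, for the assertion about semi-abelian varieties: such a category $\V$ is pointed and finitely (co)complete, so it is a category with kernels and cokernels; its normal epimorphisms coincide with the regular (that is, surjective) homomorphisms, which are pullback-stable by Barr exactness; and it has enough normal-projectives, the free algebras being projective for the surjections, as recalled in Section~\ref{Section Syzygy}. Hence the hypotheses of the first part are met, and $\Ext^1(Q,K)$ is small by Theorem~\ref{set many SES}, so the conclusion applies. Beyond the bookkeeping in the second paragraph I do not expect any real obstacle; one must simply make sure the image sub-conglomerates are legitimate (they are, being carved out of conglomerates by a predicate) and that sub-conglomerates of sets are sets (they are, by (U\,3) and (U\,5)).
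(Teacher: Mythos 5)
Your proposal is correct and follows essentially the same route as the paper: both deduce the result from Theorem~\ref{Syzygy Theorem} plus the Choice-based principle that a surjective image of a small conglomerate is small (the paper phrases this via fibres and representatives, you via a section of the surjection, which is the same thing). The only cosmetic difference is that you compose the whole chain of surjections down to $\Ext^1(\Omega^n(Q),K)$ at once, where the paper argues one step at a time by induction on $n$.
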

\begin{proof}
    It suffices to prove that $\Ext^{n+1}(Q,K)$ is small if so is $\Ext^n(\Omega(Q),K)$. Since the $w$ in the proof of Theorem~\ref{Syzygy Theorem} is a surjection, its fibres form a partition of the conglomerate $\Ext^n(\Omega(Q),K)$. That conglomerate is bijective to a set, to which the given partition is transported. Any choice of representatives for this latter partition---which must exist under the Axiom of Choice---forms a set which is bijective to the conglomerate $\Ext^{n+1}(Q,K)$.
\end{proof}

\begin{remark}
    The dual procedure, involving normal-injectives, gives the same result under dual conditions. Our use of projectives instead has to do with the properties of the examples we study: projectives are more natural in the context of varieties of algebras, since the existence of enough normal-projectives comes for free in all semi-abelian varieties, and while normal epimorphisms are pullback-stable here, normal monomorphisms are not pushout-stable in general.
\end{remark}

\begin{remark}
    Theorem~\ref{as many n exact sequences as SES} implies, for instance, that for each object $K$ of a semi-abelian variety $\V$ and every $n\geq 1$, we have a functor $\Ext^n(-,K)\colon \V^\op\to \Set$. Here, for any morphism $\eta\colon Q\to Q'$, a function
    \[
        \Ext^n(\eta,K)\colon \Ext^n(Q',K) \to \Ext^n(Q,K)
    \]
    is obtained by pulling back exact sequences along $\eta$. The contravariance explains why we let $Q$ be the first variable in $\Ext^n(Q,K)$. Functoriality in the second variable $K$ is not automatic.
\end{remark}

\begin{remark}\label{Relative to E}
    The argument that make Theorem~\ref{Syzygy Theorem} work, can be extended to any pullback-stable class of normal epimorphisms $\E$, provided a weakly universal normal epimorphism $p\colon P\to Q$ exists in $\E$ for each object $Q$. Its kernel $\Omega_\E(Q)$ appears in the surjection
    \[
        \begin{tikzcd}[cramped, sep=small]
            w_\E\colon\Ext^n_\E(\Omega_\E(Q),K) \ar[r,epi]& \Ext^{n+1}_\E(Q,K)
        \end{tikzcd}
    \]
    existing for all $n\geq 1$. Here $\Ext^{i}_\E(Q,K)$ is defined as $\Ext^{i}(Q,K)$, but for exact sequences whose cokernel parts are in $\E$.
\end{remark}

\section{A variation on the theme:\texorpdfstring{\\ }{} double extensions and crossed extensions}\label{Section Variation}

We adapt the theory developed above to a slightly different situation: \emph{double} extensions and \emph{crossed} extensions instead of \emph{two-step} extensions. This section is not entirely self-contained; for the sake of a more compact presentation, we take definitions and results from the literature for granted, rather than explaining them in full detail.

In the article~\cite{RVdL2}, the Barr--Beck derived functors~\cite{Barr-Beck} of $\Hom(-,A)\colon \V^{\op}\to \Ab$, where $\Ab$ is the category of abelian groups and $A$ is an abelian object in a semi-abelian variety $\V$, are characterised in terms of so-called \emph{higher central extensions}. This generalises the well-known classification of central extensions via cohomology, hinted at in Example~\ref{Example Central Extensions}, to higher cohomology degrees. At the same time, it extends Yoneda's theory to a non-abelian setting---see~\cite{GPeschkeTVanderLinden2016} for more on this point of view.

We here consider the case of \emph{double} central extensions, which are equivalence classes of particular \emph{$(3\times 3)$-diagrams} such as occur for instance in the statement of the classical $(3\times 3)$-Lemma: see Figure~\ref{Figure 3x3}.
\begin{figure}
    $\vcenter{\xymatrix@!0@=3.5em{& 0 \ar@{->}[d] & 0 \ar@{->}[d] & 0 \ar@{->}[d]\\
        0 \ar@{->}[r] & K \ar[r] \ar[d] & X_2' \ar[d] \ar[r] & I_2' \ar[d] \ar@{->}[r] & 0\\
        0 \ar@{->}[r] & X_2 \ar[r] \ar[d] & Y \ar[r] \ar[d] & X_1' \ar[d] \ar@{->}[r] & 0\\
        0 \ar@{->}[r] & I_2 \ar[r] \ar[d] & X_1 \ar[r] \ar@{->}[d] & Q \ar@{->}[d] \ar@{->}[r] & 0\\
        & 0 & 0 & 0 }}$
    \caption{A $(3\times 3)$-diagram: rows and columns are short exact sequences}\label{Figure 3x3}
\end{figure}
Morphisms of such diagrams are the obvious natural transformations. Just like for $n$-step extensions, zigzags of those morphisms that keep the endpoints $K$ and~$Q$ fixed determine equivalence classes of $(3\times 3)$-diagrams. These are called \defn{double extensions under $K$ and over~$Q$} or \defn{from $K$ to $Q$} and form a conglomerate which we here denote $\DExt(Q,K)$. A~double extension is \defn{central} when its representing $(3\times 3)$-diagrams satisfy a further condition which in the present context of a semi-abelian variety may be expressed as a commutator condition---see~\cite{EGVdL,RVdL,RVdL2,RVdL3} for further details.

Again we may ask the question, whether the conglomerate $\DExt(Q,K)$ is small: the answer is yes---see Theorem~\ref{Theorem 2-Ext Small} below---by a variation on Theorem~\ref{Syzygy Theorem}. Note that when we restrict to the subconglomerate $\DCExt(Q,K)$ determined by the double \emph{central} extensions, then this is known, and follows from the interpretation of this conglomerate in terms of derived functors of~\cite{RVdL2}, as soon as the category~$\V$ satisfies an additional commutator condition, called the \emph{Smith is Huq} condition in~\cite{MFVdL}. A~straightforward adaptation of the proof of Theorem~\ref{Syzygy Theorem} gives us:

\begin{theorem}\label{Theorem 2-Ext Small}
    In a semi-abelian variety $\V$, the conglomerate $\DExt(Q,K)$ of all double extensions between any two objects $K$ and $Q$ is small.
\end{theorem}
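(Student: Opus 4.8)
The plan is to recycle the two ingredients already available: the bound on the middle object of a short exact sequence whose \emph{two} endpoints are fixed, which comes from Corollary~\ref{Tim retract} and drives the proof of Theorem~\ref{set many SES}, and the pull-back manoeuvre from the proof of the Syzygy Theorem (Theorem~\ref{Syzygy Theorem}). Recall that a double extension from $K$ to $Q$ is the same thing as a commutative square of normal epimorphisms
\[
    \begin{tikzcd}[cramped, sep=small]
        Y \ar[r,epi] \ar[d,epi] & {X_1'} \ar[d,epi,"{f'}"]\\
        X_1 \ar[r,epi,"f"'] & Q
    \end{tikzcd}
\]
whose comparison morphism $Y\to X_1\times_Q X_1'$ is itself a normal epimorphism, together with a chosen kernel $K$ of that comparison morphism; the full $(3\times3)$-diagram is recovered by taking kernels in the two directions. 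Fix, once and for all, a normal epimorphism $p\colon P\twoheadrightarrow Q$ with $P$ normal-projective, which exists because $\V$ has enough normal-projectives.

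Given a double extension $\mathbb{D}$ from $K$ to $Q$ presented by such a square, the projectivity of $P$ and the fact that $f$ and $f'$ are normal epimorphisms let us choose morphisms $\beta\colon P\to X_1$ and $\beta'\colon P\to X_1'$ with $f\beta=p=f'\beta'$. I would then form the pullback $Z\coloneq Y\times_{X_1\times_Q X_1'}(P\times_Q P)$ of the comparison morphism of $\mathbb{D}$ along $\beta\times_Q\beta'$. Together with $p$ along both legs, and with the two composites of $Z\twoheadrightarrow P\times_Q P$ with the projections $P\times_Q P\rightrightarrows P$ serving as the remaining two edges, this yields a new square that is again a double extension from $K$ to~$Q$: all four of its edges, as well as its comparison morphism $Z\to P\times_Q P$, are normal epimorphisms---here one uses pullback-stability of normal epimorphisms and their closure under composition, both available in any semi-abelian variety---and the kernel of $Z\to P\times_Q P$ is canonically $K$. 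Moreover, the remaining pullback projection $Z\to Y$ underlies a morphism of double extensions restricting to $1_K$ and~$1_Q$, so this ``$P$-based'' double extension represents the same element of $\DExt(Q,K)$ as~$\mathbb{D}$.

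The gain is that in a $P$-based double extension the only object still free to vary is $Z$, and it sits in a short exact sequence
\[
    \begin{tikzcd}[cramped, sep=small]
        0\ar[r]& K\ar[r]& Z\ar[r]& P\times_Q P\ar[r]& 0
    \end{tikzcd}
\]
whose two endpoints $K$ and $P\times_Q P$ are fixed. Applying Corollary~\ref{Tim retract} exactly as in the proof of Theorem~\ref{set many SES}, we may assume that the underlying pointed set of $Z$ is a subset of $K^{\ell}\times(P\times_Q P)$, with $Z\twoheadrightarrow P\times_Q P$ the restriction of the projection; the two structural maps $Z\twoheadrightarrow P$ are then the composites of this restriction with the projections $P\times_Q P\rightrightarrows P$, hence are completely determined by the $\V$-algebra structure carried by $Z$. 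Thus every element of $\DExt(Q,K)$ is represented by a $P$-based double extension encoded by a single datum---a subset of the fixed set $K^{\ell}\times(P\times_Q P)$ together with a suitable $\V$-algebra structure on it---and such data form a set, since there is only a set of subsets of $K^{\ell}\times(P\times_Q P)$ and only a set of $\V$-algebra structures on each. This gives a surjection from a small conglomerate onto $\DExt(Q,K)$, and a surjective image of a small conglomerate is small: one chooses representatives of the fibres by the Axiom of Choice, just as in the proof of Theorem~\ref{as many n exact sequences as SES}.

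The step I expect to require the most care is the two-dimensional bookkeeping in the reduction: one must verify in detail that the $P$-based square is genuinely a double extension---carrying out the normal-epimorphism checks and identifying its inner kernel with $K$---and that passing to kernels in both directions turns the pullback projection $Z\to Y$ into an honest morphism of $(3\times3)$-diagrams that fixes the two endpoints. Beyond that, the argument is a faithful transcription of the proofs of Theorem~\ref{set many SES} and Theorem~\ref{as many n exact sequences as SES}.
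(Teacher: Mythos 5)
Your proposal is correct and follows essentially the same route as the paper's own (sketched) proof: reduce a double extension, via lifts of a weakly universal normal epimorphism and a pullback of the regular-pushout comparison map, to one based on the ``double syzygy'' $P\times_Q P$, and then invoke the smallness of $\Ext^1(P\times_Q P,K)$ via Corollary~\ref{Tim retract}. The only cosmetic difference is that the paper allows two distinct weakly universal normal epimorphisms $P,P'\to Q$ where you use a single one, which changes nothing.
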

\begin{proof}[Proof sketch]
    A $(3\times 3)$-diagram as in Figure~\ref{Figure 3x3} is completely determined by two types of data: the pullback square on the left below, and the induced short exact sequence on the right.
    \[
        \vcenter{\xymatrix@!0@=5em{X_1\times_QX_1' \ar[r] \ar[d] & X_1' \ar[d] \\
        X_1 \ar[r]  & Q }}
        \qquad\qquad
        \vcenter{\xymatrix{0 \ar[r] & K \ar[r] & Y \ar[r] & X_1\times_Q X_1' \ar[r] & 0}}
    \]
    The main reason for this is, that the bottom right square in Figure~\ref{Figure 3x3} is a so-called \emph{regular pushout}~\cite{Bourn2003,Carboni-Kelly-Pedicchio}, which means that the universal comparison $Y\to X_1\times_Q X_1'$ is a regular epimorphism.

    Once and for all, we fix two weakly universal normal epimorphisms with codomain~$Q$, and use their pullback
    \[
        \vcenter{\xymatrix@!0@=5em{P\times_QP' \ar[r] \ar[d] & P' \ar[d] \\
        P\ar[r]  & Q }}
    \]
    as a kind of ``double syzygy'' of $Q$. Mimicking the proof of Theorem~\ref{Syzygy Theorem}, we may see that for any $(3\times 3)$-diagram as in Figure~\ref{Figure 3x3}, there is a morphism from this ``double syzygy'' to the pullback associated with the $(3\times 3)$-diagram. Then the associated short exact sequence pulls back to a short exact sequence from $K$ to $P\times_QP'$. This eventually leads to a proof that, since the conglomerate $\Ext^1(P\times_QP',K)$ is small, so is $\DExt(Q,K)$.
\end{proof}

An immediate consequence of this is, that the conglomerate $\DCExt(Q,K)$ is small, independently of any additional conditions on the semi-abelian variety $\V$. This solves a problem remarked upon in~\cite[Remark~4.2]{RVdL}. Via the classification (obtained in Theorem~5.3 of~\cite{RVdL}) of double central extensions in terms of the second cohomology group defined in~\cite{Rodelo:Thesis,Rodelo-Directions}, this in turn implies that that latter cohomology group is small as well.

Restricting ourselves to the context of Moore categories~\cite{Rodelo:Moore}, the analysis of~\cite{Rodelo:Thesis,Rodelo-Directions} explains that the \emph{two-fold crossed extensions} in any strongly semi-abelian variety form a small set. Let us explain what this means in the special case of the variety of groups.

Recall that a \defn{crossed extension (of groups)}~\cite{Holt,Huebschmann} is an exact sequence of groups of length $2$
\[
    \xymatrix{0 \ar[r] & K \ar[r]^k & X_2 \ar[r]^-{f_2} & X_1 \ar[r]^q & Q \ar[r] & 0}
\]
together with an action of $X_1$ on $X_2$ making $f$ into a crossed module. Morphisms of crossed extensions are morphisms of exact sequences, compatible with the action. From the above, it follows that zigzags of those morphisms that keep the endpoints $K$ and $Q$ fixed determine equivalence classes which from a small conglomerate. This argument is actually valid in all strongly semi-abelian varieties (where now crossed modules are as defined in~\cite{Janelidze} and actions correspond to split short exact sequences by~\cite{Bourn-Janelidze}).

Using the ideas of Section~\ref{Section Syzygy}, these results generalise to $n$-fold crossed extensions---as in~\cite{Rodelo:Thesis,Rodelo-Directions}, extending the definitions of~\cite{Holt,Huebschmann} to Moore categories---for arbitrary $n\geq 2$. Indeed, a simple adaptation of the proof of Theorem~\ref{Theorem 2-Ext Small} to $n$-cubes makes it work for \emph{$n$-fold} extensions (in the sense of~\cite{RVdL2}) of arbitrary degree. Rather than working this out in detail, we chose, however, to end our article with an example of a slightly different nature.

\section{Schreier extensions of monoids}\label{Monoids}
The category of monoids is not semi-abelian, so the above does not apply as such. Nevertheless we may obtain a result, similar to Theorem~\ref{set many SES}, if only we suitably restrict the collection of short exact sequences we consider.

By definition~\cite{MR52404}, a \defn{Schreier extension} of monoids is a pair $(k,q)$ of monoid morphisms
\[
    \xymatrix{K \ar[r]^k & X \ar[r]^q & Q}
\]
where $q$ is a surjection and $k$ is a kernel of $q$, such that for each $v\in Q$ there exists an $x_v\in q^{-1}(v)$ such that for each $x\in q^{-1}(v)$ there is a unique $u\in K$ satisfying ${x=k(u)+x_v}$. Note that here the monoid operation is denoted additively. We may, and will, always assume that $x_0=0\in X$.

As explained in~\cite{MartinsFerreiraMontoliPatchkoriaSobral}, it is easy to see that then $q$ is a cokernel of~$k$, so that a Schreier extension is always a short exact sequence. In other words, ``being a Schreier extension'' is a property a short exact sequences of monoids may or may not satisfy. We let $\S$ be the class of normal epimorphisms of monoids underlying a Schreier extension.

We follow the slightly alternative view of~\cite{NMF-Semibiproducts} (which was later published as~\cite{MR4560197}), where it is stated that a Schreier extension may be defined as a short exact sequence $(k,q)$ of monoids as in
\[
    \xymatrix{0 \ar[r] & K \ar@<.5ex>[r]^-k & X \ar@{.>}@<.5ex>[l]^-p \ar@<.5ex>[r]^-q & Q \ar@{.>}@<.5ex>[l]^-s \ar[r] & 0}
\]
for which there exist functions $s\colon Q\to X$ and $p\colon X\to K$ such that
\begin{align*}
    qs           & =1_Q                                      \\
    kp+sq        & =1_X                                      \\
    p(k(u)+s(v)) & =u, \qquad \text{for $u\in K$, $v\in Q$.}
\end{align*}
Furthermore, $s$ may be chosen in such a way that $s(0)=0$.

The two viewpoints are indeed equivalent; let us explain one of the implications. For the definition of $s$, simply choose $s(v)$ amongst those $x_v$ that satisfy the requirements of the definition. Then $p$ sends $x\in X$ to the unique $u\in K$ for which ${x=k(u)+x_v}$. The conditions on $s$ and $p$ are readily verified.

Schreier extensions being special short exact sequences, for each $n\geq 1$, we may restrict the construction of the conglomerate $\Ext^n(Q,K)$ to Schreier exact sequences, as explained in Remark~\ref{Relative to E}, and thus define a conglomerate ${\Ext}_\S^n(Q,K)$. The question then arises, whether these conglomerates are small. When $n=1$, the answer is in essence the same as for extensions in semi-abelian varieties, since we may prove the following variation on Corollary~\ref{Tim retract}:

\begin{proposition}\label{Schreier retract}
    Any Schreier extension
    \[
        \xymatrix{0 \ar[r] & K \ar[r]^k & X \ar[r]^q & Q \ar[r] & 0}
    \]
    is a retract over $Q$ of the short exact sequence of monoids
    \[
        \xymatrix@C=3em{0 \ar[r] & K \ar[r]^-{(1_{K},0)} & K\times Q \ar[r]^-{\pi_Q} & Q \ar[r] & 0}
    \]
    in the category of pointed sets.
\end{proposition}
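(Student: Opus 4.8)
The plan is to mimic the proof of Corollary~\ref{Tim retract}, with the semi-abelian operations $\alpha_i$ and $\beta$ replaced by the Schreier structure functions $s\colon Q\to X$ and $p\colon X\to K$ recalled just above the statement, chosen so that $s(0)=0$. Treating $k$ as a subset inclusion, I would define the two functions of pointed sets
\[
    \phi\colon K\times Q\to X\colon (u,v)\mapsto k(u)+s(v), \qquad \psi\colon X\to K\times Q\colon x\mapsto (p(x),q(x)).
\]
First I would check that $\psi$ is a section of $\phi$: the identity $kp+sq=1_X$ gives $\phi\psi(x)=k(p(x))+s(q(x))=x$. Compatibility with the projections to $Q$ is equally immediate: $\pi_Q\psi=q$ by construction, and $q\phi(u,v)=q(k(u))+q(s(v))=0+v=v=\pi_Q(u,v)$, using $qk=0$ (as $k$ is a kernel of $q$) and $qs=1_Q$. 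This already exhibits $(k,q)$ as a retract over $Q$ of $((1_K,0),\pi_Q)$ in the category of sets.

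Next I would verify compatibility with the base-points, which is where the normalisation $s(0)=0$ enters. Since $k$ and $q$ are monoid morphisms we have $k(0)=0$ and $q(0)=0$, and applying $p(k(u)+s(v))=u$ with $u=v=0$ (together with $s(0)=0$) yields $p(0)=0$; hence $\phi(0,0)=k(0)+s(0)=0$ and $\psi(0)=(p(0),q(0))=(0,0)$, so $\phi$ and $\psi$ are morphisms of pointed sets. Finally I would record that $\psi(k(u))=(p(k(u)),0)=(u,0)=(1_K,0)(u)$ and $\phi((1_K,0)(u))=k(u)+s(0)=k(u)$, so that both morphisms of short exact sequences have identity $K$-component; together with the previous paragraph this is exactly the assertion that $(k,q)$ is a retract over $Q$ of $((1_K,0),\pi_Q)$ in pointed sets.

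There is no genuine obstacle here: the statement is the Schreier analogue of Corollary~\ref{Tim retract} and reduces to the same bookkeeping. The only point that requires a little attention is the systematic use of $s(0)=0$ to force base-point preservation and the triviality of the $K$-components; this is available by the last sentence before the statement. It is worth noting that, in contrast with the semi-abelian case, no power $K^{\ell}$ is needed---a single copy of $K$ suffices, because the Schreier condition already encodes the entire fibre data in the one function $p$.
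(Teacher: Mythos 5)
Your proposal is correct and follows essentially the same route as the paper: the same maps $\phi(u,v)=k(u)+s(v)$ and $\psi(x)=(p(x),q(x))$, the same use of $kp+sq=1_X$ for the retraction, and the same normalisation $s(0)=0$ to get base-point preservation (the paper likewise derives $p(0)=0$ from $0=k(0)+s(0)$). Your extra check that the $K$-components are identities is a harmless refinement of the same argument.
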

\begin{proof}
    We treat the kernel $k$ as a submonoid inclusion, which we may do without any loss of generality. We choose functions $s$ and $p$ as in the definition of a Schreier extension and define functions
    \begin{align*}
        \phi\colon K\times Q\to X & \colon (u,v)\mapsto k(u)+s(v)       \\
        \psi\colon X\to K\times Q & \colon x\mapsto (p(x),q(x))\text{.}
    \end{align*}
    For any $x\in X$, we have $\phi(\psi(x))=\phi(p(x),q(x))=k(p(x))+s(q(x))=x$, so that $\phi\psi=1_X$. Furthermore, $\pi_Q\psi=q$, while $q\phi=\pi_Q$ because $q(\phi(u,v))=q(k(u)+s(v))=q(s(v))=v$. Finally, $\phi$ preserves $0$ because so do $k$ and $s$, and $\psi$ preserves $0$ because $q$ and $p$ do. In the case of $p$, it suffices to note that $0=k(0)+s(0)$.
\end{proof}

We may then essentially copy the proof of Theorem~\ref{set many SES} to find

\begin{theorem}\label{set many SES Monoids}
    For any monoids $K$, $Q$, the conglomerate ${\Ext}_\S^1(Q,K)$ is small.\noproof
\end{theorem}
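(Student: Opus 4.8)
The plan is to transcribe the proof of Theorem~\ref{set many SES} almost verbatim, with Proposition~\ref{Schreier retract} playing the role of Corollary~\ref{Tim retract}. There is in fact a minor simplification: Proposition~\ref{Schreier retract} presents the middle object on $K\times Q$ rather than on some $K^{\ell}\times Q$, so no integer $\ell$ needs to be fixed, and moreover the retraction $\psi\colon X\to K\times Q$ it supplies is actually a \emph{bijection} --- one verifies $\psi\phi=1_{K\times Q}$ from the Schreier identities just as one verifies $\phi\psi=1_{X}$ --- so that every Schreier exact sequence can be replaced by an isomorphic one whose middle object is precisely the set $K\times Q$ carrying a suitable monoid structure.

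Concretely, I would first let $M$ denote the set of all monoid structures on the set $K\times Q$; this is a set, since there is only a set of binary operations on a given set and the associativity and unit requirements carve out a subconglomerate of it. Fix a well-order $\leqq$ on $M$. Given a Schreier extension under $K$ and over $Q$, I would choose any representing Schreier exact sequence $(k,q)$, pick functions $s$ and $p$ as in the definition with $s(0)=0$, and form $\psi$ as in Proposition~\ref{Schreier retract}. Transporting the monoid structure of $X$ along the bijection $\psi$ puts a monoid structure on $K\times Q$; a short computation using $s(0)=0$ and the Schreier identities shows that $q$ then transports to $\pi_Q$ and $k$ to $(1_{K},0)$, and that transporting $s$ and $p$ as well witnesses that $0\to K\to K\times Q\to Q\to 0$, so structured, is a Schreier exact sequence, isomorphic over $K$ and $Q$ to $(k,q)$ and hence in the same class of $\Ext_\S^1(Q,K)$. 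Finally I would assign to each Schreier extension the $\leqq$-minimum of all monoid structures on $K\times Q$ arising this way from Schreier exact sequences inside that extension; this subset of $M$ is non-empty by the previous step, so the minimum exists. Since a short exact sequence determines the extension containing it, distinct extensions receive distinct minima, so $\Ext_\S^1(Q,K)$ injects into the set $M$ and is therefore small.

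The calculations are routine; the one point that requires genuine care is the transport step, i.e.\ the fact that ``being a Schreier extension'' is invariant under isomorphism of short exact sequences, so that the structure transported along $\psi$ is again a Schreier extension and lies in the same connected component. This is unproblematic: transporting the witnessing functions $s$ and $p$ along $\psi$ yields functions witnessing the Schreier condition for the new sequence, and $\psi$ itself is an isomorphism of short exact sequences fixing $K$ and $Q$. Everything else --- that $M$ is a set, that the minimum is well defined, that a representing sequence pins down its extension --- is exactly as in the proof of Theorem~\ref{set many SES}.
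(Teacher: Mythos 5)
Your proposal is correct and follows the paper's own route exactly: the paper proves this theorem by ``essentially copying'' the proof of Theorem~\ref{set many SES} with Proposition~\ref{Schreier retract} in place of Corollary~\ref{Tim retract}, which is precisely what you do. Your observation that $\psi$ is in fact a bijection (so the middle object can be taken to be all of $K\times Q$ rather than a subset) is a valid minor streamlining, but does not change the substance of the argument.
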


In view of the results of Section~\ref{Section Syzygy}, with in particular Remark~\ref{Relative to E} and the fact (easy to check by hand) that Schreier extensions are stable under pulling back, an interesting question seems to be, whether for each monoid $Q$, a weakly universal Schreier extension exists. If so, then also all of the ${\Ext}_\S^n(Q,K)$ where $n\geq 1$ are small.

This issue is closely related to the question, whether Schreier extensions are reflective amongst surjective monoid morphisms. Note that, though it solves a similar problem, the result in~\cite{MRVdL2} does not imply this. More suitable is the article~\cite{BClarke}, where it is conjectured that this should indeed be the case for Schreier \emph{split} extensions and split epimorphisms of monoids. We hope to study this problem in subsequent work.

\section*{Acknowledgement}
Many thanks to Beppe Metere and George Peschke for bringing up the question which led to this article, and for the interesting discussions this led to. We are also very grateful to the referee for helping us clarify how our work relates to the many alternative approaches---especially those of a homotopical-algebraic nature---to closely related problems in the literature.


\end{document}